\documentclass[11pt,reqno]{amsart}
\usepackage{amsmath,mathrsfs}

\usepackage{enumerate}
\usepackage[english]{babel}
\usepackage{hyperref}
\usepackage[all,cmtip]{xy}
\entrymodifiers={+!!<0pt,\fontdimen22\textfont2>}

\theoremstyle{theorem}
\newtheorem{thm}{Theorem}[section]
\newtheorem{lem}[thm]{Lemma}
\newtheorem{prop}[thm]{Proposition}
\newtheorem{cor}[thm]{Corollary}

\theoremstyle{definition}
\newtheorem{defn}[thm]{Definition}
\newtheorem{ex}[thm]{Example}
\newtheorem{notation}[thm]{Notation}
\theoremstyle{remark}
\newtheorem{rem}[thm]{Remark}
\newtheorem*{ack}{Acknowledgement}

\def\Z{{\mathbb Z}} 
\def\C{{\mathbb C}} 
\def\R{{\mathbb R}} 

\def\K{{\mathcal K}} 

\mathchardef\ordinarycolon\mathcode`\: 
\def\vcentcolon{\mathrel{\mathop\ordinarycolon}} 
\providecommand*\coloneqq{\mathrel{\vcentcolon\mkern-1.2mu}=}

\def\cast{$C^{*}$}
\DeclareMathOperator\id{id} 
\DeclareMathOperator*\colim{colim}
\def\incl{\hookrightarrow}
\def\m{{\mathbf m}}

\def\S{\mathbf S}
\def\n{{\mathbf n}}
\def\Ho{\mathbf{Ho}}

\def\bu{\mathbf{bu}}
\def\kk{\mathbf{kk}}

\begin{document}

\title{Unsuspended Connective $E$-theory}
\author{Otgonbayar Uuye}
\date{\today}   

\address{
Department of Mathematical Sciences\\
University of Copenhagen\\
Universitetsparken 5\\
DK-2100 Copenhagen E\\
Denmark}
\email{otogo@math.ku.dk}
\urladdr{http://www.math.ku.dk/~otogo}

\begin{abstract}
We prove connective versions of results by Shulman \cite{MR2606883} and D\u{a}d\u{a}rlat-Loring \cite{MR1305073}. As a corollary, we see that two separable $C^*$-algebras of the form $C_0(X) \otimes A$, where $X$ is a based, connected, finite CW-complex and $A$ is a unital properly infinite algebra, are $\bu$-equivalent if and only if they are asymptotic matrix homotopy equivalent.  
\end{abstract}

\maketitle
\setcounter{section}{-1}
\section{Introduction}

Let $\S$ denote the Connes-Higson {\em asymptotic homotopy category} of separable \cast-algebras (c.f.\ \cite{MR1065438, MR1711324}). Let $\Sigma$ denote the suspension functor $\Sigma B \coloneqq C_0(\R) \otimes B$ and let $\K$ denote the algebra of compact operators on a separable Hilbert space. 

$E$-theory is the bivariant $K$-theory defined by  
	\begin{equation}
	E(A, B) \coloneqq \S(\Sigma A, \Sigma B \otimes \K).
	\end{equation}

In this paper, we prove {\em connective} extensions of the following two closely related results. 
\begin{thm}[{Shulman \cite{MR2606883}}]\label{thm shulman} Let $A$ be a separable \cast-algebra. Then $qA \otimes \K$ is $\S$-equivalent to $\Sigma^2 A \otimes \K$.
\end{thm}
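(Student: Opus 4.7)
The strategy is to produce mutually inverse asymptotic morphisms between $qA \otimes \K$ and $\Sigma^2 A \otimes \K$ in $\S$, exploiting the universal property of Cuntz's $q$-construction and the Connes--Higson construction applied to the canonical extension defining $qA$.

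\textbf{Building a map $\Sigma^2 A \otimes \K \to qA \otimes \K$.} Recall that $qA$ is by definition the kernel of the fold map $A * A \to A$, giving a canonical split exact sequence
\begin{equation*}
0 \to qA \to QA \to A \to 0,
\end{equation*}
where $QA = A * A$ and the splitting is provided by either of the two canonical inclusions $\iota_0,\iota_1 \colon A \to QA$. The Connes--Higson construction applied to this extension produces a natural asymptotic morphism $\alpha_A \colon \Sigma A \to qA \otimes \K$. The desired morphism $\phi \colon \Sigma^2 A \otimes \K \to qA \otimes \K$ is assembled from $\alpha$ either by suspending and composing with an analogous Connes--Higson morphism for the extension involving $q(qA)$, or, more efficiently, by a single application of the Connes--Higson recipe to an extension built from $QA$ in which $\Sigma A$ has already been absorbed into one factor.

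\textbf{Building a map $qA \otimes \K \to \Sigma^2 A \otimes \K$.} Here I would use the universal property of $qA$: a $*$-homomorphism $qA \to B$ is the same data as a pair of $*$-homomorphisms $A \to B$ (via $\iota_0, \iota_1$). Taking $B = \Sigma^2 A \otimes \K$ and using the Bott asymptotic morphism $\beta \colon \Sigma^2 \C \to \K$ exterior-tensored with $\id_A$, one obtains two canonical asymptotic morphisms $A \to \Sigma^2 A \otimes \K$ (for example, the trivial one and the one obtained via Bott). The universal property then produces the candidate inverse $\psi \colon qA \otimes \K \to \Sigma^2 A \otimes \K$ in $\S$.

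\textbf{Verifying mutual inverses and the main obstacle.} One direction, namely $\phi \circ \psi \sim \id$ on $\Sigma^2 A \otimes \K$, should reduce to a naturality argument combined with Bott periodicity in $\S$ for $\K$-stabilized algebras. The harder direction is $\psi \circ \phi \sim \id$ on $qA \otimes \K$: the composition passes through $\Sigma^2 A \otimes \K$, so one must trade the Connes--Higson asymptotic data for the pair-of-$*$-homomorphisms data and show the result is asymptotically homotopic to the stabilization inclusion $qA \otimes \K \to M_2(qA) \otimes \K \to qA \otimes \K$. This is the main obstacle. Unlike in $KK$, where formal Kasparov product manipulations suffice, in $\S$ one must exhibit explicit asymptotic homotopies; the standard tool is a rotation homotopy in $M_2$ that interpolates between the two $*$-homomorphisms provided by $\iota_0$ and $\iota_1$, combined with a careful choice of quasi-central approximate unit controlling the Connes--Higson morphism. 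Once this rotation argument is in place, both compositions are identified with the identity in $\S$, completing the proof.
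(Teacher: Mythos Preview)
Your plan has a fatal gap in the very first step. The extension $0 \to qA \to QA \to A \to 0$ is \emph{split} (by either canonical inclusion $\iota_j$), and the Connes--Higson asymptotic morphism attached to a split extension is null-homotopic in $\S$: if $s\colon A \to QA$ is a $\ast$-homomorphic section and $(u_t)$ a quasicentral approximate unit for $qA$, then $r \mapsto \bigl(f\otimes a \mapsto f(r\,u_t)\,s(a)\bigr)$, $r\in[0,1]$, is a homotopy of asymptotic morphisms connecting the Connes--Higson map to $0$. Thus your $\alpha_A\colon \Sigma A \to qA\otimes\K$ is null, and any $\phi$ assembled from it is null as well, so it cannot be an $\S$-equivalence. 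Your construction of the inverse $\psi$ is also off: the Bott asymptotic morphism goes $\Sigma^2\C \to \K$, so tensoring with $\id_A$ produces a map $\Sigma^2 A \to A\otimes\K$, not a map $A \to \Sigma^2 A\otimes\K$; producing the latter would essentially presuppose the inverse Bott equivalence you are trying to establish.

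The paper's argument is entirely different and avoids building explicit asymptotic inverses. It first shows (Proposition~\ref{prop qBD}) that the natural map $qA \to q\C \otimes A$ is an $\m$-equivalence, reducing the problem to $A=\C$. It then proves that the Bott element $u\colon q\C \to \Sigma^2\otimes M_2$ becomes an $\m$-equivalence after tensoring with $\K$ (Theorem~\ref{thm BP}) by a two-out-of-three argument in the square
\[
\xymatrix{
q(q\C) \ar[r]^-{q(u)} \ar[d]_{\pi_{q\C}} & q(\Sigma^2\otimes M_2) \ar[d]^{\pi_{\Sigma^2\otimes M_2}}\\
q\C \ar[r]^-{u} & \Sigma^2\otimes M_2
}
\]
Here the vertical arrows are $\m$-equivalences because $q\C$ and $\Sigma^2$ are matrix homotopy symmetric (Proposition~\ref{prop HS qA}), and $q(u)\otimes\id_\K$ is a genuine homotopy equivalence by Cuntz's $K\!K$-theoretic Bott periodicity. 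No Connes--Higson maps or explicit asymptotic homotopies are needed.
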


\begin{thm}[{D\u{a}d\u{a}rlat-Loring \cite[Theorem 4.3]{MR1305073}}]\label{thm DL} Let $A$ and $B$ be separable \cast-algebras. If the abelian monoid $\S(A, A \otimes \K)$ is a group, then the suspension functor induces an isomorphism 
	\begin{equation}
	\S(A, B \otimes \K) \cong E(A, B \otimes \K).
	\end{equation}
\end{thm}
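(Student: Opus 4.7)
My plan is to factor the suspension map through Cuntz's $q$-construction and to invoke Theorem 0.1 to identify $qA \otimes \K$ with $\Sigma^2 A \otimes \K$.

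First, I would prove that, under the group assumption, the natural $*$-homomorphism $qA \to A$ (the restriction to $qA$ of one of the projections $A*A \to A$) induces a bijection
\begin{equation*}
\S(A, B\otimes\K) \xrightarrow{\;\cong\;} \S(qA, B\otimes\K).
\end{equation*}
This is Cuntz's quasihomomorphism correspondence adapted to the asymptotic category, in the spirit of D\u{a}d\u{a}rlat--Loring's original paper. The inverse sends $\phi \colon qA \to B\otimes\K$ to $\phi \circ q_A$, where $q_A(a) = \iota_1(a) - \iota_2(a)$, interpreted as an element of the abelian monoid $\S(A, B\otimes\K)$; the group assumption on $\S(A, A\otimes\K)$ is precisely what is required for this formal difference to represent a well-defined asymptotic homotopy class. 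A direct calculation in the Cuntz picture then shows the two maps are mutually inverse.

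Second, I would use Theorem 0.1 together with the $\K$-stability of $B\otimes\K$ to rewrite
\begin{equation*}
\S(qA, B\otimes\K) \cong \S(qA \otimes \K, B\otimes\K) \cong \S(\Sigma^2 A \otimes \K, B\otimes\K) \cong \S(\Sigma^2 A, B\otimes\K),
\end{equation*}
and then identify $\S(\Sigma^2 A, B\otimes\K)$ with $\S(\Sigma A, \Sigma B\otimes\K) = E(A, B\otimes\K)$ via the standard Connes--Higson suspension--desuspension equivalence on $\K$-stable targets (using once more that $E(A, B\otimes\K) = \S(\Sigma A, \Sigma B \otimes \K \otimes \K) \cong \S(\Sigma A, \Sigma B \otimes \K)$). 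Composing the two bijections yields the desired isomorphism, and checking that the composite agrees with the suspension map is then a matter of tracing through the constructions.

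The main obstacle is the first step: verifying that Cuntz's formal subtraction is compatible with the asymptotic-homotopy relation and that the group hypothesis precisely supplies the needed invertibility. Once this is in place, Theorem 0.1 does the rest of the work, converting the Cuntz picture of $\S(qA, B\otimes\K)$ into the suspension picture of $E(A, B\otimes\K)$.
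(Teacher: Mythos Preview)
Your proposal is correct and follows essentially the same route as the paper: the paper's Proposition~2.4 (your first step, using Cuntz's $q$-construction together with the group hypothesis to identify $A$ with $qA$) and Theorem~2.5/Bott invertibility (your second step, passing from $q$ to $\Sigma^2$ via Theorem~0.1) are exactly the two ingredients assembled in the proof of the connective Theorem~3.9, which then specializes to Theorem~0.2 for stable algebras. The only cosmetic difference is that the paper works in the matrix-homotopy category $\m$ with Bott-inverting coefficients and derives the stable statement by specialization, whereas you work directly in $\S$ with $\K$-stable targets; your final ``suspension--desuspension'' identification $\S(\Sigma^{2}A, B\otimes\K)\cong \S(\Sigma A,\Sigma B\otimes\K)$ is the one place that needs a little more care than the phrase ``standard Connes--Higson'' suggests, but it follows formally once one knows that $\Sigma^{2}$ is an isomorphism at every level of the colimit defining $\bu$, which is precisely what the argument gives.
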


See Theorem~\ref{thm con Shul} and Theorem~\ref{thm con DL} for the precise statements.	 Considering {\em stable} algebras, we obtain Theorem~\ref{thm shulman} and Theorem~\ref{thm DL}, respectively. We note that this gives new\footnote{Our proof of Theorem~\ref{thm DL} is closely related to the remark at end of Section 4 in \cite[Theorem 4.3]{MR1305073}.} and more conceptual, if not simpler, proofs of the theorems.

We refer to \cite{andreasthomE} and references therein for details of connective $E$-theory and its applications.

\begin{ack} This research is supported by the Danish National Research Foundation (DNRF) through the Centre for Symmetry and Deformation at the University of Copenhagen. The author wishes to thank Tatiana Shulman and Hannes Thiel for stimulating conversations on the topics presented here. The author also wishes to thank Takeshi Katsura for insightful comments.
\end{ack}

\section{Asymptotic Matrix Homotopy Category}

We start by fixing some notation.

\begin{notation} 
\begin{enumerate}[(i)]
\item Let $A$ and $B$ be \cast-algebras. We write $A \star B$, $A \times B$ and $A \otimes B$ for the free product, direct product/sum and maximal tensor product of $A$ and $B$, respectively.

\item For $n \ge 1$, let $M_{n}$ denote the \cast-algebra of $n \times n$ complex matrices. For $n$, $m \ge 1$, we write $\oplus$ for the operation 
	\begin{align}
	\oplus \colon M_{n} \times M_{m}  &\to M_{n+m}, \quad (a, b) \mapsto \begin{pmatrix}a & \\ & b\end{pmatrix}
	\end{align}
and, $i_{m, n}$, for $m \ge n$, for the inclusion
	\begin{align}
	i_{m, n}\colon M_{n} &\incl M_{m}, \quad a \mapsto a \oplus 0.
	\end{align} 
We identify $\C$ with $M_1$ and $M_{n} \otimes M_{m}$ with $M_{nm}$ for $n$, $m \ge 1$, and $\K$ with the colimit of $M_{n}$ along $i_{m, n}$.
%
\item For $k \ge 0$, let $\Sigma^{k}$ denote the \cast-algebra $C_{0}(\R^{k})$ of continuous functions on $\R^k$ vanishing at infinity. We identify $\Sigma^0$ with $\C$ and $\Sigma^k \otimes \Sigma^l$ with $\Sigma^{k+l}$ for $k$, $l \ge 0$.
\end{enumerate}
\end{notation}

\begin{defn} Let $A$ and $B$ be separable \cast-algebras. We define $\m(A, B)$ as the  colimit
	\begin{equation}
	\m(A, B) \coloneqq \colim_{n \to \infty} \S(A, B \otimes M_{n})
	\end{equation}
along $(\id_{B} \otimes i_{m, n})_{*}$.
\end{defn}

We summarize some properties of $\m$ that are well-known and/or easy to check. Statements (\ref{m1})-(\ref{m3}) say, essentially, that $\m$ is a homotopy invariant, matrix stable category enriched over the abelian monoids.

\begin{prop} Let $A$, $B$, $C$ and $D$ stand for separable \cast-algebras and let $m$, $n \ge 1$.
\begin{enumerate}[(i)]
\item\label{m1} Homotopic $\ast$-homomorphisms $A \to B$ define the same class in $\m(A, B)$.
\item The composition 
	\begin{align}
	\S(B, C \otimes M_{m}) &\times \S(A, B \otimes M_{n})  \to \S(A, C \otimes M_{mn})\\
	(g&, f) \mapsto (g \otimes \id_{M_{n}})\circ f
	\end{align}
gives $\m$ a category structure, with the identity morphism on $A$ represented by $\id_A \otimes i_{n, 1}\colon A \to A \otimes M_{n}$.
\item\label{m3} The addition
	\begin{align}
	\S(A, B \otimes M_{n}) &\times \S(A, B \otimes M_{m}) \to \S(A, B \otimes M_{n + m})\\
	(f&, g) \mapsto f \oplus g
	\end{align}
gives $\m(A, B)$ the structure of an abelian monoid, bilinear with respect to composition.
\item\label{m4} The tensor product 
	\begin{align} 
	\S(A, B \otimes M_{n}) &\times \S(C, D \otimes M_{m}) \to \S(A \otimes C, B \otimes D \otimes M_{nm})\\
	 (f&, g) \mapsto f \otimes g
	\end{align}
defines a natural bilinear functor 
	\begin{align} 
	\otimes\colon \m(A, B) \times \m(C, D) \to \m(A \otimes C, B \otimes D).
	\end{align}
\item\label{m5} For any $A$ and $C$, the functor $F(B) \coloneqq \m(A, B \otimes C)$ is split exact.
\end{enumerate}
\end{prop}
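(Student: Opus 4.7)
The plan is to derive each statement from the corresponding feature of $\S$, using that $\m(A, B)$ is a filtered colimit along matrix stabilization and that such colimits commute with the algebraic constructions on abelian monoids in play. Statement (i) is immediate, since homotopic $\ast$-homomorphisms already coincide in $\S(A, B)$ by construction of the Connes-Higson category, and the canonical map $\S(A, B) \to \m(A, B)$ therefore identifies them.

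For (ii), I would first check well-definedness of composition on the colimit. Replacing $f$ by its stabilization $(\id_{B} \otimes i_{n',n}) \circ f$ and then composing with $g \otimes \id_{M_{n'}}$ yields the stabilization of $(g \otimes \id_{M_{n}}) \circ f$ up to the canonical identification $M_{m} \otimes M_{n'} \cong M_{mn'}$ and an inner automorphism of $M_{mn'}$ implementing the reordering of matrix indices; since $U(M_{mn'})$ is path-connected, this inner automorphism is homotopic to the identity and invisible in $\m$. A symmetric argument handles stabilization of $g$. Associativity and the identity axiom then reduce to the corresponding properties of composition and tensor products of matrix algebras in $\S$.

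For (iii), associativity of $\oplus$ is literal block-matrix associativity and the zero class is a two-sided unit. Commutativity is the one non-formal input: $f \oplus g$ and $g \oplus f$ differ by conjugation by the block-swap unitary in $M_{n+m}$, which lies in the connected group $U(M_{n+m})$ and is therefore path-connected to the identity; the induced $\ast$-homomorphisms are thus homotopic and agree in $\S$ and in $\m$. Bilinearity with respect to composition follows from $(g_{1} \oplus g_{2}) \otimes \id_{M_{n}} = (g_{1} \otimes \id_{M_{n}}) \oplus (g_{2} \otimes \id_{M_{n}})$, up to the same kind of inner automorphism identifying block orderings. Claim (iv) is a parallel compatibility check: the tensor product of $\ast$-homomorphisms is manifestly bifunctorial in $\S$ and respects matrix stabilization under $M_{n} \otimes M_{m} \cong M_{nm}$; bilinearity with $\oplus$ is then immediate from the distributivity of $\otimes$ over block sums.

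The main obstacle is (v). Here I would invoke the split exactness of $\S$ — classical in the Connes-Higson theory — together with the observation that any split exact sequence $0 \to B' \to B \to B'' \to 0$ of separable \cast-algebras remains split exact after tensoring with $C \otimes M_{n}$, since a $\ast$-homomorphic splitting of the quotient survives tensoring with any \cast-algebra. Applying split exactness of $\S$ to each stabilized sequence produces split short exact sequences of abelian monoids $0 \to \S(A, B' \otimes C \otimes M_{n}) \to \S(A, B \otimes C \otimes M_{n}) \to \S(A, B'' \otimes C \otimes M_{n}) \to 0$ with splittings compatible in $n$, and filtered colimits in the category of abelian monoids preserve split exactness, yielding split exactness of $F$.
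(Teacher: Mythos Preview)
Your treatment of (i)--(iv) is fine and in fact more detailed than the paper's, which explicitly declines to prove them and simply records them as well-known or easy checks. The block-permutation and inner-automorphism arguments you sketch are the standard ones.

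For (v), the paper's proof is nothing more than two citations (D\u{a}d\u{a}rlat--Loring, Proposition~3.2, and Jensen--Thomsen, Theorem~2.6.15), so your direct argument is not really a different route but an attempt to unpack what those references supply. The idea is right: a split short exact sequence identifies $B$ with $B' \times B''$, tensoring and matrix-stabilizing preserve this product, $\S(A,-)$ sends finite products to finite products, and filtered colimits commute with finite products. There is, however, a genuine imprecision in your write-up: the individual sets $\S(A, B \otimes C \otimes M_{n})$ are \emph{not} abelian monoids---the monoid structure on $\m(A, B \otimes C)$ only appears after passing to the colimit, since $\oplus$ changes the matrix size. So your sentence about ``split short exact sequences of abelian monoids'' at each finite stage does not parse. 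The fix is to run the argument at the level of pointed sets (where $\S(A,-)$ does preserve finite products), take the colimit, and then verify separately that the resulting bijection $\m(A, B \otimes C) \cong \m(A, B' \otimes C) \times \m(A, B'' \otimes C)$ respects the $\oplus$-monoid structure. This last compatibility is routine, but it is a step you have elided. The paper avoids all of this by outsourcing the details to the cited results, which handle the monoid-valued setting directly.
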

\begin{proof}
We prove only the last statement (\ref{m5}). This follows from \cite[Proposition 3.2]{MR1305073} and \cite[Theorem 2.6.15]{MR1269324}.
\end{proof}

\begin{defn}[{c.f.\ \cite[Definition 4.4.14]{andreasthomE}}] We call $\m$ the {\em asymptotic matrix homotopy category} of separable \cast-algebras.
\end{defn}

\begin{lem}[{Cuntz \cite[Proposition 3.1(a)]{MR899916}}]\label{lem Cuntz sum} For any separable \cast-algebras $B$ and $C$, the natural map 
	\begin{equation}
	B \star C \to B \times  C	
	\end{equation}
is an $\m$-equivalence.
\qed
\end{lem}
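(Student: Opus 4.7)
The plan is to produce an explicit inverse to $\pi \colon B \star C \to B \times C$ in $\m$. Write $\iota_{B} \colon B \to B \star C$ and $\iota_{C} \colon C \to B \star C$ for the canonical coproduct inclusions, and $p_{B}, p_{C}$ for the projections of $B \times C$; set $\pi_{B} \coloneqq p_{B} \circ \pi$ and $\pi_{C} \coloneqq p_{C} \circ \pi$. Using the abelian monoid structure of $\m$ from property (\ref{m3}), I take as candidate inverse
\begin{equation}
\sigma \coloneqq \iota_{B} \circ p_{B} + \iota_{C} \circ p_{C} \in \m(B \times C, B \star C),
\end{equation}
represented by the honest $*$-homomorphism $(b, c) \mapsto \iota_{B}(b) \oplus \iota_{C}(c) \in (B \star C) \otimes M_{2}$.

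The composite $\pi \circ \sigma$ is handled directly: it sends $(b, c)$ to $(b, 0) \oplus (0, c) \in (B \times C) \otimes M_{2}$, which is conjugate to the matrix-stabilized identity $(b, c) \mapsto (b, c) \oplus 0$ by a flip unitary acting only on the $C$-summand of $M_{2}$, yielding $\pi \circ \sigma = \id_{B \times C}$ in $\m$ by (\ref{m1}). The content of the lemma lies in the other composite, which, by bilinearity of composition, reduces to proving
\begin{equation}
\iota_{B} \pi_{B} + \iota_{C} \pi_{C} = \id_{B \star C} \quad \text{in } \m(B \star C, B \star C);
\end{equation}
equivalently, that the $*$-homomorphism $x \mapsto \iota_{B} \pi_{B}(x) \oplus \iota_{C} \pi_{C}(x) \colon B \star C \to (B \star C) \otimes M_{2}$ is homotopic to $x \mapsto x \oplus 0$.

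For this step I would use the rotation trick. With $R_{t} \coloneqq \begin{pmatrix} \cos t & -\sin t \\ \sin t & \cos t \end{pmatrix} \in M_{2}(\C)$, define $h_{t} \colon B \star C \to (B \star C) \otimes M_{2}$, $t \in [0, \pi/2]$, by prescribing the restrictions
\begin{align}
h_{t} \circ \iota_{B} &\colon b \mapsto \iota_{B}(b) \oplus 0, \\
h_{t} \circ \iota_{C} &\colon c \mapsto R_{t} (0 \oplus \iota_{C}(c)) R_{t}^{*};
\end{align}
these two $*$-homomorphisms assemble into a single $*$-homomorphism $h_{t}$ via the universal property of the free product, varying continuously in $t$. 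At $t = 0$ one reads off $\iota_{B} \pi_{B} \oplus \iota_{C} \pi_{C}$; at $t = \pi/2$ the rotation has moved $\iota_{C}(c)$ into the upper-left corner of $M_{2}$, so both generators land there and $h_{\pi/2}(x) = x \oplus 0$. The main obstacle is really just licensing the family $h_{t}$ at all: it is precisely because the free product imposes no compatibility between the restrictions to $B$ and $C$ that the rotation may freely mix the $C$-piece into the $B$-piece while leaving the $B$-piece fixed. Everything else is then formal manipulation in the matrix-stable category $\m$, closed off by (\ref{m1}).
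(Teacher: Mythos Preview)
Your argument is correct and is precisely the classical Cuntz rotation trick; the paper itself gives no proof, merely citing Cuntz \cite[Proposition~3.1(a)]{MR899916} and closing with a \qed. One cosmetic remark: to make the continuity of the family $(h_t)$ airtight, it is cleanest to package $b \mapsto \iota_B(b)\oplus 0$ and $c \mapsto \bigl(t \mapsto R_t(0\oplus\iota_C(c))R_t^{*}\bigr)$ as $*$-homomorphisms into $C\bigl([0,\pi/2],(B\star C)\otimes M_2\bigr)$ and invoke the universal property of $B\star C$ once, rather than for each $t$ separately.
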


\begin{cor}\label{cor coprod} For any separable \cast-algebras $B, C$ and $D$, the natural map 
	\begin{equation}
	(B \otimes D) \star (C \otimes D) \to (B \star C) \otimes D	
	\end{equation}
is an $\m$-equivalence.
\end{cor}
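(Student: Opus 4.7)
The plan is to reduce the claim to Lemma~\ref{lem Cuntz sum} by comparing the natural map $\alpha\colon (B \otimes D) \star (C \otimes D) \to (B \star C) \otimes D$ with the direct Cuntz map after passing to direct products. Specifically, I would set up the commutative square
\[
\begin{array}{ccc}
(B \otimes D) \star (C \otimes D) & \xrightarrow{\alpha} & (B \star C) \otimes D \\
\big\downarrow {\scriptstyle \beta} & & \big\downarrow {\scriptstyle \gamma} \\
(B \otimes D) \times (C \otimes D) & \xleftarrow{\;\delta\;} & (B \times C) \otimes D
\end{array}
\]
in which $\beta$ and $\gamma$ are the canonical surjections from the free product onto the direct product, and $\delta$ is the canonical isomorphism expressing that the (maximal) tensor product distributes over finite direct products of \cast-algebras.

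The square commutes by the naturality of the Cuntz maps in their inputs. Lemma~\ref{lem Cuntz sum} applied to the pair $(B \otimes D, C \otimes D)$ gives that $\beta$ is an $\m$-equivalence. For $\gamma$, I would observe that it is obtained by applying $- \otimes \id_D$ to the Cuntz map $B \star C \to B \times C$; since that latter map is an $\m$-equivalence by Lemma~\ref{lem Cuntz sum} and since $- \otimes \id_D$ descends to a functor $\m \to \m$ by the bifunctoriality of the tensor product (property~(\ref{m4})), it carries $\m$-equivalences to $\m$-equivalences, so $\gamma$ is an $\m$-equivalence as well. A two-out-of-three argument applied to the identity $\delta \circ \gamma \circ \alpha = \beta$ in $\m$ then yields that $\alpha$ is an $\m$-equivalence.

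The only ingredient lying outside of the preceding material is the distributivity isomorphism $\delta$, which is a standard fact about maximal tensor products of \cast-algebras and may be verified directly from the universal property of $\otimes_{\max}$ using the central projections distinguishing the two summands of $B \times C$. I do not expect this, or any other step, to present a serious obstacle; the whole argument is a formal consequence of Lemma~\ref{lem Cuntz sum} together with the bifunctoriality of $\otimes$ on $\m$.
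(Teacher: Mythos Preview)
Your argument is correct and essentially identical to the paper's: both set up the same commutative square comparing the Cuntz maps before and after tensoring with $D$, invoke Lemma~\ref{lem Cuntz sum} for the two vertical arrows, and use that the bottom arrow is an isomorphism. The only differences are cosmetic---you reverse the direction of the bottom isomorphism and spell out explicitly (via property~(\ref{m4})) why $\gamma$, being $(\text{Cuntz map})\otimes\id_D$, remains an $\m$-equivalence, a point the paper leaves implicit.
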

\begin{proof} The following diagram is commutative
	\begin{equation}
	\xymatrix{
	(B \otimes D) \star (C \otimes D) \ar[r] \ar[d]& (B \star  C) \otimes D \ar[d]\\	
	(B \otimes D) \times (C \otimes D) \ar[r] & (B \times  C) \otimes D\\
	}.
	\end{equation}
The vertical maps are $\m$-equivalences by Lemma~\ref{lem Cuntz sum} and the bottom horizontal map is an isomorphism.
\end{proof}

\begin{notation} Let $B$ be a separable \cast-algebra. Following Cuntz, we write $qB$ for the kernel of the folding map $\xymatrix{B \star B \ar[r]^-{\id \star \id} & B}$. 
\end{notation}

We note that the short exact sequence
	\begin{equation}
	\xymatrix{
	0 \ar[r] & qB \ar[r] & B \star B \ar[r] & B \ar[r] & 0\\}
	\end{equation}
is {\em split-exact}.
\begin{prop}\label{prop qBD} For any separable \cast-algebras $B$ and $D$, the natural map
	\begin{equation}\label{map qBD}
	\sigma_{B, D}\colon q(B \otimes D) \to qB \otimes D
	\end{equation}
is an $\m$-equivalence.
\end{prop}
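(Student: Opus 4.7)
My plan is to deduce this from Corollary~\ref{cor coprod} via a ladder argument together with the split exactness of the Yoneda functors $\m(A, -)$ from property~(\ref{m5}).

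I would first observe that $\sigma_{B,D}$ is nothing but the restriction to kernels of the natural map
\[
\psi_{B,D}\colon (B \otimes D) \star (B \otimes D) \to (B \star B) \otimes D
\]
of Corollary~\ref{cor coprod}: since $\psi_{B,D}$ intertwines the two folding maps onto $B \otimes D$, it restricts to a map of kernels, and this restriction is precisely $\sigma_{B,D}$. This yields the commutative ladder
\[
\xymatrix{
0 \ar[r] & q(B \otimes D) \ar[r] \ar[d]_-{\sigma_{B,D}} & (B \otimes D) \star (B \otimes D) \ar[r] \ar[d]_-{\psi_{B,D}} & B \otimes D \ar[r] \ar@{=}[d] & 0 \\
0 \ar[r] & qB \otimes D \ar[r] & (B \star B) \otimes D \ar[r] & B \otimes D \ar[r] & 0
}
\]
whose rows are split exact, with sections provided by the canonical inclusion of the first copy (tensored with $\id_D$ in the bottom row); by construction these sections commute with $\psi_{B,D}$.

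The main step is then to test against an arbitrary separable \cast-algebra $A$ via the functor $F(X) \coloneqq \m(A, X)$, which by property~(\ref{m5}) (taking $C = \C$) is split exact. Split exactness produces direct sum decompositions
\[
F\bigl((B \otimes D) \star (B \otimes D)\bigr) \cong F\bigl(q(B \otimes D)\bigr) \oplus F(B \otimes D),
\]
and similarly for the bottom row, compatibly with $F(\psi_{B,D})$. Since $\psi_{B,D}$ is an $\m$-equivalence by Corollary~\ref{cor coprod}, $F(\psi_{B,D})$ is an isomorphism; as it restricts to the identity on the $F(B \otimes D)$ summand, it must restrict to an isomorphism $F(\sigma_{B,D})$ on the complementary summand. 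Varying $A$ and invoking the Yoneda lemma concludes that $\sigma_{B,D}$ is an $\m$-equivalence.

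The only delicate point is that $\m$ is enriched in abelian monoids rather than groups, so a naive five-lemma is unavailable; the split exactness hypothesis is precisely what is needed to bypass this, by upgrading short exact sequences to direct-sum decompositions. The remaining verifications -- that $\psi_{B,D}$ restricts to $\sigma_{B,D}$ on kernels and that the two natural sections intertwine $\psi_{B,D}$ -- are routine from the description of $\psi_{B,D}$ as the map induced by the two inclusions $B \otimes D \incl (B \star B) \otimes D$.
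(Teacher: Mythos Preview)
Your proposal is correct and follows essentially the same approach as the paper: the same ladder of split-exact sequences, the same appeal to Corollary~\ref{cor coprod} for the middle map, split exactness of $\m(A,-)$, and Yoneda to conclude. Your version is more explicit about why split exactness suffices in the monoid setting, but the underlying argument is identical.
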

\begin{proof} Fix $A$ and let $F$ denote the functor $F(B) \coloneqq \m(A, B)$. 

We apply $F$ to the following commutative diagram of split-exact sequences:
	\begin{equation}
	\xymatrix{
	0 \ar[r] & q(B \otimes D) \ar[r] \ar[d]^{\sigma_{B, D}} & B \otimes D \star B \otimes D \ar[r] \ar[d]& B \otimes D \ar[r] \ar@{=}[d] & 0\\
	0 \ar[r] & qB \otimes D \ar[r] & (B \star B) \otimes D \ar[r] & B \otimes D \ar[r] & 0
	}.
	\end{equation}
By Corollary~\ref{cor coprod}, $F$ induces isomorphism on the middle map. Since $F$ is split exact, it follows that $F(\sigma_{B, D})$ is an isomorphism.
Now the proof follows from Yoneda's Lemma.
\end{proof}

\begin{rem} 
Let $\Ho$ denote the {\em homotopy} category of \cast-algebras and let $\n$ denote the {\em matrix homotopy category} with morphisms
	\begin{equation}
	\n(A, B) \coloneqq \colim_{n}\Ho(A, B \otimes M_{n}).
	\end{equation}
Then, in Lemma~\ref{lem Cuntz sum} and Corollary~\ref{cor coprod}, we actually have $\n$-equivalences. However, the map $\sigma_{B, D}$ from Proposition~\ref{prop qBD} is not an $\n$-equivalence in general. For instance, let $T_{0}$ denote the {\em reduced} Toepliz algebra. Then $T_{0}$ is $K\!K$-contractible, hence $q(T_{0}) \otimes \K$ is contractible i.e.\ homotopy equivalent to the zero algebra $0$ (c.f.\ \cite{MR750677}). However, $q\C \otimes T_{0} \otimes \K$ has a non-trivial projection, hence not contractible. It follows that $\sigma_{\C, T_{0}}\colon q(T_{0}) \to q\C \otimes T_{0}$ is {\em not} an $\n$-equivalence.

Indeed, for any $A$ and $B$, we have a natural isomorphism 
	\begin{equation}
	\Ho(A, B \otimes \K) \cong \n(A, B \otimes \K).
	\end{equation}
Hence if $f\colon A \to B$ is an $\n$-equivalence, then $f \otimes \id_{\K}\colon A \otimes \K \to B \otimes \K$ is a homotopy equivalence. 
\end{rem}

\begin{rem} Let $A$ and $B$ be separable \cast-algebras.
\begin{enumerate}[(i)]
\item  We have a natural isomorphism 
	\begin{equation}
	\S(A, B \otimes \K) \cong \m(A, B \otimes \K).
	\end{equation}
It follows that if $f \in \m(A, B)$ is an $\m$-equivalence, then $f \otimes \id_{\K}$ is an $\S$-equivalence from $A \otimes \K$ to $B \otimes \K$. 	
\item	Tensoring with $\K$ gives an isomorphism
	\begin{equation}
	\S(A, B \otimes \K) \cong \S(A \otimes \K, B \otimes \K).
	\end{equation}
\end{enumerate}

\end{rem}

\section{Matrix Homotopy Symmetry}

The following definition is inspired by \cite{MR1305073}.
\begin{defn}
A separable \cast-algebra $A$ is {\em matrix homotopy symmetric} if $\id_{A} \in \m(A, A)$ has an additive inverse: there is $n \ge 1$ and $\eta\colon A \to A \otimes M_{m}$ such that $i_{n,1} \oplus \eta\colon A \to A \otimes  M_{n+m}$ is null-homotopic.
\end{defn}

\begin{rem}\label{rem HS} 
\begin{enumerate}[(i)]
\item If the monoid $\m(A, A)$ is a group, then $A$ is matrix homotopy symmetric. Conversely, if $A$ is matrix homotopy symmetric, then $\m(A, B)$ and $\m(B, A)$ are abelian groups for any $B$.  
\item If $A$ is matrix homotopy symmetric, then so is $A \otimes D$ for any $D$.
\item If $A$ is $\m$-equivalent to $B$ and $A$ is matrix homotopy symmetric, then so is $B$.
\end{enumerate}
\end{rem}

\begin{ex}
\begin{enumerate}[(i)]
\item\label{ex SA} The algebra $\Sigma^{1}$ is matrix homotopy symmetric. In fact, the algebra $C_{0}(X)$, of continuous functions vanishing at the base point, is matrix homotopy symmetric for any based, {\em connected}, finite CW-complex $X$ (c.f.\ \cite[Proposition 3.1.3]{MR1066807} and the discussion preceding it).
\item\label{ex qA} The algebra $qB$ is matrix homotopy symmetric for any $B$, by taking $n =  m = 1$ and $\eta = \tau$ the flip-map on $qA$ (c.f.\ \cite[Proposition 1.4]{MR899916}).
\end{enumerate}
\end{ex}

\begin{notation} Let $B$ be a separable \cast-algebra.
Let $\pi_{B}\colon qB \to B$ denote the composition  
	\begin{equation}
	\pi_{B}\colon \xymatrix{
	qB\, \ar@{^{(}->}[r] & B \star B \ar[r]^-{\id \star 0} & B 	
	}.
	\end{equation}
\end{notation}

We remark that $q$ is functorial (for $\ast$-homomorphisms) and for any $\ast$-homomorphism $f\colon A \to B$, we have a commutative diagram
	\begin{equation}
	\xymatrix{
	qA \ar[r]^{q(f)} \ar[d]^{\pi_{A}} & qB \ar[d]^{\pi_{B}}\\
	A \ar[r]^{f} & B\\
	}.
	\end{equation}
	
From our point of view, the following is the key ingredient that underlies both Theorem~\ref{thm shulman} and Theorem~\ref{thm DL}.	
\begin{prop}\label{prop HS qA} Let $A$ be a separable \cast-algebra. Then the following statements are equivalent:
\begin{enumerate}[(a)]
\item\label{1} The algebra $A$ is matrix homotopy symmetric. 
\item\label{2} For any $B$ and $D$, we have  
	\begin{equation*}
	(\pi_{B} \otimes \id_{D})_{*}\colon \m(A, qB \otimes D) \cong \m(A, B \otimes D).	
	\end{equation*}
\item\label{3} The map $\pi_{A}\colon qA \to A$ is an $\m$-equivalence.
\item\label{4} The map $\pi_{\C} \otimes \id_{A}\colon q\C \otimes A  \to A$ is an $\m$-equivalence.
\end{enumerate}
\end{prop}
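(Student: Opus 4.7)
The plan is to establish the cycle (a) $\Leftrightarrow$ (c) $\Leftrightarrow$ (d) and separately (a) $\Leftrightarrow$ (b); each nontrivial direction rests on the same calculation comparing two decompositions of $\m(X, A \star A)$.

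The equivalence (c) $\Leftrightarrow$ (d) is formal: Proposition~\ref{prop qBD} gives an $\m$-equivalence $\sigma_{\C, A}\colon qA \to q\C \otimes A$, and unwinding the universal property shows $\pi_A = (\pi_\C \otimes \id_A) \circ \sigma_{\C, A}$, so one map is an $\m$-equivalence if and only if the other is. The implication (c) $\Rightarrow$ (a) is Remark~\ref{rem HS}(iii) applied to the $\m$-equivalence $\pi_A$, using that $qA$ is matrix homotopy symmetric by Example~\ref{ex qA}.

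The main step is (a) $\Rightarrow$ (c). Fix an arbitrary separable $X$ and write $p \coloneqq \id \star 0$ and $p' \coloneqq 0 \star \id$, both regarded as maps $A \star A \to A$, so that $\pi_A = p \circ \iota$ for the inclusion $\iota\colon qA \hookrightarrow A \star A$. Apply the split-exact (Proposition~\ref{m5}) functor $F \coloneqq \m(X, -)$ to the split exact sequence
\[ 0 \to qA \xrightarrow{\iota} A \star A \xrightarrow{\id \star \id} A \to 0, \]
with splitting $\iota_2\colon A \hookrightarrow A \star A$, to obtain a decomposition $F(A \star A) \cong F(qA) \oplus F(A)$ of abelian monoids; under (a), together with Example~\ref{ex qA} and Remark~\ref{rem HS}(i), every monoid in this argument is an abelian group. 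Independently, Lemma~\ref{lem Cuntz sum} realizes an $\m$-equivalence $A \star A \to A \times A$ via $(p, p')$, giving a second decomposition $F(A \star A) \cong F(A) \oplus F(A)$. Using $p \circ \iota_2 = 0$ and $p' \circ \iota_2 = \id_A$, the resulting comparison isomorphism $F(qA) \oplus F(A) \to F(A) \oplus F(A)$ takes the triangular form
\[ \begin{pmatrix} (\pi_A)_* & 0 \\ (p' \circ \iota)_* & \id \end{pmatrix}. \]
Since this is an isomorphism of abelian groups with identity on one diagonal block, the other diagonal entry $(\pi_A)_*$ must be an isomorphism as well. As $X$ was arbitrary, Yoneda gives that $\pi_A$ is an $\m$-equivalence.

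The implication (a) $\Rightarrow$ (b) is the same calculation applied to $0 \to qB \to B \star B \to B \to 0$ tensored with $D$, now with test object $X = A$; hypothesis (a) guarantees that $\m(A, -)$ takes values in groups, so the matrix argument runs verbatim. Conversely, (b) $\Rightarrow$ (a) is the one-line specialization $B = A$, $D = \C$: one obtains a monoid isomorphism $\m(A, qA) \cong \m(A, A)$, whose left side is a group since $qA$ is matrix homotopy symmetric, forcing the right side to be a group as well. I anticipate the main obstacle to be bookkeeping --- verifying that the Cuntz identification of Lemma~\ref{lem Cuntz sum} genuinely corresponds to the pair $(p_*, p'_*)$, and that $\iota_2$ interacts with $p$ and $p'$ precisely as stated, so that the comparison matrix has the triangular form displayed above.
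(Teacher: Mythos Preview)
Your proof is correct, and the bookkeeping concern you flag is straightforward to verify: the Cuntz map $A \star A \to A \times A$ is precisely $(p, p')$ by construction, and $p \circ \iota_2 = 0$, $p' \circ \iota_2 = \id_A$ are immediate from the universal property of the free product.

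Your route differs from the paper's in two respects. First, for (a) $\Rightarrow$ (b) the paper simply invokes Cuntz's result \cite[Proposition~3.1]{MR899916}: any homotopy invariant, split-exact, matrix-stable functor into abelian groups sends $\pi_B$ to an isomorphism. Your triangular-matrix argument is exactly the special case of that proof needed here, carried out by hand; this makes your argument self-contained at the cost of some length. Second, the paper closes the cycle via (b) $\Rightarrow$ (c) ``by Yoneda'', which is terse --- the honest argument still needs to produce a two-sided inverse to $\pi_A$, and the left-inverse step implicitly uses (a) $\Rightarrow$ (b) again with $qA$ in place of $A$. Your route avoids this by establishing (a) $\Rightarrow$ (c) directly for \emph{every} test object $X$ (legitimate because under (a) both $A$ and $qA$ are matrix homotopy symmetric, so all relevant monoids are groups), after which Yoneda applies cleanly; and you handle (b) $\Rightarrow$ (a) by the transparent specialisation $B = A$, $D = \C$. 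The net effect is that your argument is more explicit and its logical structure slightly cleaner, while the paper's is shorter but leans on an external reference.
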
 
\begin{proof}  The statements (\ref{3}) and (\ref{4}) are equivalent by Proposition~\ref{prop qBD}.

Since $qA$ is matrix homotopy symmetric (c.f.\ Example~\ref{ex qA}), it follows from Remark~\ref{rem HS} that (\ref{3}) implies (\ref{1}). 

Suppose that $A$ is matrix homotopy symmetric. Then the functor $F(B) \coloneqq \m(A, B \otimes D)$ is a homotopy invariant, split exact, matrix stable functor with values in abelian groups. Hence $(\pi_{B})_{*}\colon F(qB) \to F(B)$ is an isomorphism for all $B$, by \cite[Proposition 3.1]{MR899916}, i.e.\ (\ref{1}) implies (\ref{2}).

The remaining implication, (\ref{2}) $\Rightarrow$ (\ref{3}), 
follows from Yoneda's Lemma.
\end{proof}

As a corollary, we now prove Theorem~\ref{thm shulman}. In view of Proposition~\ref{prop qBD}, it is enough to prove the following. See also Theorem~\ref{thm con Shul}.
\begin{thm}[Bott Periodicity]\label{thm BP}
Let $u\colon q\C \to \Sigma^{2} \otimes M_{2} \in \m(q\C, \Sigma^{2})$ denote the Bott element. Then
 	\begin{equation}
	u \otimes \id_{\K}\colon q\C \otimes \K \to \Sigma^{2} \otimes M_{2} \otimes \K \cong \Sigma^{2} \otimes \K
	\end{equation}
is an $\m$-equivalence (equivalently, an $\S$-equivalence).	
\end{thm}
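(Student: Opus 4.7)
The plan is to recognise this claim as Bott periodicity in the Connes--Higson category $\S$, transported through the natural identification $\m(A,B\otimes\K)\cong\S(A,B\otimes\K)$ recorded in the remark preceding the theorem. That identification takes care of the parenthetical ``$\m$-equivalence iff $\S$-equivalence'' immediately, so it suffices to prove one of them. By the Yoneda lemma, the task reduces to showing that for every separable $C^{*}$-algebra $X$ the induced map
\[
(u\otimes\id_{\K})_{*}\colon \S(X, q\C\otimes\K) \to \S(X, \Sigma^{2}\otimes\K)
\]
is a bijection.

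To produce a two-sided inverse I would introduce a Dirac-type asymptotic morphism $d\colon \Sigma^{2}\to q\C\otimes\K$ from the Cuntz--Connes--Higson picture, which represents the class inverse to $u$. Since $q\C$ (Example~\ref{ex qA}) and $\Sigma^{2}$ (Example~\ref{ex SA}) are matrix homotopy symmetric, Remark~\ref{rem HS} ensures that the monoids $\m(X, q\C\otimes\K)$ and $\m(X, \Sigma^{2}\otimes\K)$ are genuine abelian groups, so that a two-sided additive inverse behaves as expected. The core step is then to verify the two compositions $u\circ d \in \m(\Sigma^{2}, \Sigma^{2}\otimes\K)$ and $d\circ u \in \m(q\C, q\C\otimes\K)$ equal the respective identities.

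The main obstacle is precisely this last verification: it is the content of Bott periodicity in the Cuntz reformulation of $E$-theory, asserting that the Bott--Dirac composition is asymptotically homotopic to the identity after stabilisation by $\K$. Any honest argument must either exhibit the explicit asymptotic homotopy (as in Connes--Higson, or in Higson's rotation-trick proof) or reduce to a different, already-established form of Bott periodicity. In the present paper the natural move is to cite this classical result rather than reproduce the homotopy; the conceptual work has been done upstream in setting up the category $\m$ and recording the identifications that allow the $\S$-theoretic statement to be read off directly.
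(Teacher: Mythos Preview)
Your outline is workable but takes a different route from the paper, and there is one subtlety to watch.

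The paper does not build a Dirac inverse in $\S$. Instead it applies the functor $q$ to $u$ and considers the commuting square
\[
\xymatrix{
q(q\C) \ar[r]^-{q(u)} \ar[d]_{\pi_{q\C}} & q(\Sigma^{2}\otimes M_{2}) \ar[d]^{\pi_{\Sigma^{2}\otimes M_{2}}}\\
q\C \ar[r]^-{u} & \Sigma^{2}\otimes M_{2}.
}
\]
Both $q\C$ and $\Sigma^{2}\otimes M_{2}$ are matrix homotopy symmetric, so the vertical maps are $\m$-equivalences by the just-proved Proposition~\ref{prop HS qA}. For the top map one invokes $KK$-theoretic Bott periodicity: $u$ is a $KK$-equivalence, and Cuntz's theorem then says $q(u)\otimes\id_{\K}$ is a genuine \emph{homotopy} equivalence. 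Hence $u\otimes\id_{\K}$ is an $\m$-equivalence. The point of this detour is that the external input is $KK$-Bott periodicity plus Cuntz's homotopy result, and the theorem is exhibited as a corollary of the paper's own Proposition~\ref{prop HS qA}.

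Your direct approach also imports Bott periodicity from outside, just in a different form. The caution is this: the ``classical'' statement you gesture at is most commonly recorded in $E$-theory, i.e.\ \emph{after} an extra suspension, and upgrading an $E$-equivalence to an $\S$-equivalence for these particular algebras is essentially the content of the D\u{a}d\u{a}rlat--Loring and Shulman theorems that the paper is reproving. So if the source you cite really gives the unsuspended asymptotic homotopy (a rotation-trick argument does), your proof is fine; if it only gives $E$-theoretic invertibility, the argument is circular in this context. The paper's passage through $q(u)$ and $KK$ sidesteps this, since Cuntz's theorem delivers an honest homotopy equivalence, not merely an $E$-equivalence. Also note that your Yoneda reduction and the matrix-homotopy-symmetry observations are not actually needed once you have a two-sided inverse in hand.
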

\begin{proof} We have a commutative diagram
	\begin{equation}
	\xymatrix{
	q(q\C) \ar[r]^{q(u)} \ar[d]^{\pi_{q\C}} & q(\Sigma^{2} \otimes M_{2}) \ar[d]^{\pi_{\Sigma^{2} \otimes M_{2}}}\\
	q\C \ar[r]^{u} & \Sigma^{2} \otimes M_{2}\\
	}.
	\end{equation}
The vertical maps are $\m$-equivalences\footnote{The map $\pi_{q\C}$ is in fact an $\n$-equivalence by \cite[Theorem 1.6]{MR899916}.} by Proposition~\ref{prop HS qA} and the map $q(u) \otimes \id_{\K}$ is a {\em homotopy} equivalence (in particular, an $\m$-equivalence) by $K\!K$-theoretic Bott Periodicity. It follows that $u \otimes \id_{\K}$ is an $\m$-equivalence.
\end{proof}




\section{Bott Invertibility}

\begin{defn}
Let $u\colon q\C \to \Sigma^{2} \otimes M_{2} \in \m(q\C, \Sigma^{2})$ denote the Bott element. 
We say that a separable \cast-algebra $D$ is {\em Bott inverting} if the element
	\begin{equation}
	u \otimes \id_{D} \in \m(q\C \otimes D, \Sigma^2 \otimes D)	
	\end{equation}
is an $\m$-equivalence.
\end{defn}

%

\begin{rem}\label{rem BI} 
\begin{enumerate}[(i)]
\item If $D$ is Bott inverting, then so is $D \otimes B$ for any $B$.
\item If $D$ is $\m$-equivalent to $B$ and $D$ is Bott inverting, then so is $B$.
\end{enumerate}
\end{rem}

First we show that there are plenty of algebras that are Bott inverting. See Example~\ref{ex not BI} for an example that is {\em not} Bott inverting.
\begin{lem}\label{lem Bott} Let $D$ be a separable \cast-algebra. Suppose that for some $n \ge 1$, the inclusion 
	\begin{equation}
	\id_{D} \otimes i_{n,1}\colon D \incl D \otimes M_{n}	
	\end{equation}
factors in $\S$ through a Bott inverting algebra . Then $D$ is Bott inverting.
\end{lem}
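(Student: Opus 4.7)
The plan is to realize $u \otimes \id_D$ as a retract of the $\m$-equivalence $u \otimes \id_E$ (where $E$ denotes the Bott inverting algebra through which the inclusion factors) in the arrow category of $\m$, and then invoke the general fact that a retract of an isomorphism is an isomorphism.

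Suppose $\id_D \otimes i_{n,1} = g \circ f$ in $\S$, with $f \colon D \to E$ and $g \colon E \to D \otimes M_n$ asymptotic morphisms and $E$ Bott inverting. The key elementary observation is that under the canonical map $\S(D, D \otimes M_n) \to \m(D, D)$, the element $\id_D \otimes i_{n,1}$ is sent to $\id_D$, by the very definition of $\m$ as a colimit along $(\id_D \otimes i_{m,n})_*$. Writing $[f] \in \m(D, E)$ and $[g] \in \m(E, D)$ for the images of $f$ and $g$, it follows that $[g] \circ [f] = \id_D$ in $\m$.

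Tensoring with $q\C$ and $\Sigma^{2}$ and using the bilinearity of the tensor product on $\m$ together with the naturality of the Bott map (itself a consequence of bilinearity), I assemble the commutative diagram
\begin{equation*}
\xymatrix{
q\C \otimes D \ar[r]^-{\id \otimes [f]} \ar[d]_{u \otimes \id_D} & q\C \otimes E \ar[r]^-{\id \otimes [g]} \ar[d]^{u \otimes \id_E} & q\C \otimes D \ar[d]^{u \otimes \id_D}\\
\Sigma^{2} \otimes D \ar[r]_-{\id \otimes [f]} & \Sigma^{2} \otimes E \ar[r]_-{\id \otimes [g]} & \Sigma^{2} \otimes D\\
}
\end{equation*}
in $\m$, whose horizontal composites are identities and whose middle vertical is an $\m$-equivalence by hypothesis. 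A direct check then verifies that the morphism $(\id_{q\C} \otimes [g]) \circ (u \otimes \id_E)^{-1} \circ (\id_{\Sigma^{2}} \otimes [f])$ is a two-sided $\m$-inverse of $u \otimes \id_D$, so $D$ is Bott inverting.

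The argument is essentially formal and I do not anticipate any serious obstacle. The one point deserving attention is the translation between the hypothesis stated in $\S$ and the conclusion stated in $\m$, namely that matrix-stability absorbs the inclusion $i_{n,1}$ so that an $\S$-factorization of $\id_D \otimes i_{n,1}$ through $E$ becomes an honest retract diagram in $\m$. Once this is recognized, the remainder of the proof is a textbook application of the principle that retracts preserve isomorphisms in any category.
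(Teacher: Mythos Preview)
Your proof is correct and follows essentially the same approach as the paper: both build the naturality square for $u \otimes (-)$ along the factorization $D \to E \to D \otimes M_n$, note that in $\m$ the horizontal composites become the identity (since $i_{n,1}$ is absorbed), and conclude that $u \otimes \id_D$ is invertible because $u \otimes \id_E$ is. Your phrasing as a retract-of-an-isomorphism argument in $\m$ is slightly more explicit than the paper's, but the content is the same.
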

\begin{proof} Let 
	\begin{equation}
	\xymatrix{D \ar[r]^{f}& B \ar[r]^{g} & D \otimes M_{n}}
	\end{equation}
be a factorization of the inclusion $\id_{D} \otimes i_{n,1}\colon D \incl D \otimes M_{n}$, with $B$ Bott inverting. Then the following diagram is commutative in $\S$:
	\begin{equation}
	\xymatrix{
	q\C \otimes D \ar[r]^{\id_{q\C} \otimes f}\ar[d]^{u \otimes \id_{D}} & q\C \otimes B \ar[r]^{\id_{q\C} \otimes g} \ar[d]^{u \otimes \id_{B}} & q\C \otimes D \otimes M_{n} \ar[d]^{u \otimes \id_{D \otimes M_{n}}}\\
	\Sigma^{2} \otimes M_{2} \otimes D \ar[r]^{\id_{\Sigma^{2} \otimes M_{2}} \otimes f}& \Sigma^{2} \otimes M_{2} \otimes B \ar[r]^{\id_{\Sigma^{2} \otimes M_{2}} \otimes g} & \Sigma^{2} \otimes M_{2} \otimes D \otimes M_{n}}.
	\end{equation}
Since $i_{n,1}$ is invertible in $\m$, and $u \otimes \id_{B}$ is invertible by assumption, it follows that $u \otimes \id_{D}$ is invertible.
\end{proof}

\begin{defn}
We say that a \cast-algebra $D$ is {\em stable} if $D \cong D \otimes \K$.  
\end{defn}

By Bott Periodicity (Theorem~\ref{thm BP}) and Remark~\ref{rem BI}, stable algebras are Bott inverting.
\begin{lem}[{Kirchberg}]\label{lem Kirchberg} Let $D$ be a separable \cast-algebra. If $D$ contains a stable full \cast-subalgebra, then map
	\begin{equation}
	\id_{D} \otimes i_{4,1}\colon D \incl D \otimes M_{4}
	\end{equation}
factors through a stable algebra. 
\end{lem}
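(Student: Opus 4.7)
The strategy is to use fullness of $E$ to produce an asymptotic morphism that expresses $D$ in ``matrix coordinates'' over $E$, and then to exploit stability of $E$ to identify the image with a stable algebra sitting naturally inside $D \otimes M_4$. Working in the asymptotic category $\S$ gives us the flexibility to use asymptotic morphisms rather than \cast-homomorphisms; this will be important both for asymptotic multiplicativity and for the final inner-conjugation homotopy.

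First I would leverage fullness: since $\overline{DED}$ is dense in $D$, a standard compactness argument produces a positive element $e \in E$ and elements $a_1, \ldots, a_n \in D$ with $\sum_i a_i^* e a_i$ acting as an approximate identity on $D$. Using these, define an asymptotic morphism
\[
\phi_t \colon D \to M_n(D) = D \otimes M_n, \qquad \phi_t(d)_{ij} = e_t^{1/2}\, a_i\, d\, a_j^*\, e_t^{1/2},
\]
for a suitable functional-calculus path $e_t$ applied to $e$. Asymptotic multiplicativity follows from $\sum_i a_i^* e a_i \approx 1$, and the image of $\phi_t$ lies in the hereditary \cast-subalgebra $H \subseteq M_n(D)$ cut out by $\mathrm{diag}(e^{1/2}, \ldots, e^{1/2})$.

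The next step is to identify $H$ with (or embed $H$ inside) a stable \cast-algebra $S$, using that $e \in E$ together with fullness and stability of $E$. The factorization is then $D \xrightarrow{\phi_t} S \hookrightarrow D \otimes M_n$, and one arranges $n = 4$ by iterating a $2 \times 2$ trick: one factor of $2$ provides the ``column vector'' construction that turns the family $(a_i)$ into a single map, and the second factor of $2$ absorbs the image into a stable corner built from $E \cong E \otimes M_2$.

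The main obstacle is the stability of the intermediate algebra $S$: the compression $H$ is not automatically stable just because $e$ lies in the stable subalgebra $E$, so one needs to exploit both fullness (to ensure the image of $\phi_t$ sits inside a corner that genuinely interacts with $E$) and the stability of $E$ (to conclude that this corner is stable, via a Hjelmborg--R\o rdam style criterion). Once $S$ has been constructed, verifying that the composition $D \to S \to D \otimes M_4$ represents $\id_D \otimes i_{4,1}$ in $\S$ should reduce to a routine inner-conjugation homotopy implemented by the ``approximate unit'' relation $\sum_i a_i^* e a_i \approx 1$.
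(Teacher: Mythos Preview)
The paper itself does not give an argument; it simply cites Lemma~4.4.7 of Thom's thesis. So the comparison is with the Kirchberg--Thom argument one finds there.

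Your outline has the right shape---use fullness of $E$ to compress $D$ into a hereditary piece governed by $E$, then use stability of $E$ to make that piece stable---but the mechanism you propose for landing in $D\otimes M_4$ does not work. With \emph{finitely} many fixed $a_1,\dots,a_n$ and a fixed $e\in E$, the element $\sum_i a_i^* e a_i$ is only close to a unit on a prescribed finite set to a prescribed tolerance; to obtain a genuine asymptotic morphism you must let the approximation improve with $t$, which forces $n=n(t)\to\infty$ and sends the image into $D\otimes\K$. From there you have no map back to $D\otimes M_4$, and your ``column vector'' trick does not repair this: a column of $n$ entries still occupies $n$ rows, so it does not reduce the matrix size to~$2$.

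The idea you are missing is that the stability of $E$ is what collapses the infinite family into a \emph{single} element of $D$ (not of $M_n(D)$). Choose a countable family $(a_k)\subset D$, $(e_k)\subset E_+$ witnessing fullness, and isometries $s_k\in M(E)$ with orthogonal ranges coming from $E\cong E\otimes\K$. The (strictly convergent) sum $w=\sum_k s_k e_k^{1/2} a_k^*$ then satisfies $w^*w$ strictly positive in $D$ while $ww^*$ lies in the hereditary subalgebra $\overline{EDE}$; the same isometries show $\overline{EDE}$ is stable. Conjugation by $w$ now gives a single map $D\to\overline{EDE}$, and the two copies of $M_2$ in the statement arise from the standard $2\times 2$ device of placing $w$ off--diagonal and from the rotation homotopy that identifies the composite with $\id_D\otimes i_{4,1}$. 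Your sketch never isolates this absorption step, and without it the bound $n=4$ (indeed any fixed $n$) is unreachable by the construction you wrote down.
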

\begin{proof} See the proof of \cite[Lemma 4.4.7]{andreasthomE}.
\end{proof}

Combining Lemma~\ref{lem Bott} and Lemma~\ref{lem Kirchberg}, we get the following.
\begin{cor} All separable \cast-algebras that contain a stable full \cast-subalgebra are Bott inverting. In particular, all separable unital properly infinite \cast-algebras are Bott inverting. \qed
\end{cor}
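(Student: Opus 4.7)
The plan is to assemble the corollary from three facts: stable algebras are Bott inverting, the factorization provided by Lemma~\ref{lem Kirchberg}, and the descent principle of Lemma~\ref{lem Bott}.

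First I would record that stable \cast-algebras are Bott inverting. Theorem~\ref{thm BP} asserts that $u \otimes \id_{\K}$ is an $\m$-equivalence, which is exactly the statement that $\K$ is Bott inverting in the sense of the definition in this section. By Remark~\ref{rem BI}(i), $\K \otimes E$ is then Bott inverting for every separable \cast-algebra $E$, so every stable algebra $D \cong D \otimes \K$ is Bott inverting.

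Next, for the first assertion, suppose $D$ contains a stable full \cast-subalgebra. Lemma~\ref{lem Kirchberg} provides a factorization in $\S$ of the matrix inclusion
\begin{equation}
\id_{D} \otimes i_{4,1}\colon D \incl D \otimes M_{4}
\end{equation}
through a stable algebra. By the previous paragraph this intermediate algebra is Bott inverting, so Lemma~\ref{lem Bott} applies and yields that $D$ itself is Bott inverting. This settles the first assertion.

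For the ``in particular'' clause I would verify that every separable unital properly infinite \cast-algebra $A$ contains a stable full \cast-subalgebra. Using the defining condition for properly infinite algebras one produces a sequence of isometries $\{s_{n}\}_{n \ge 1} \subset A$ with pairwise orthogonal range projections (i.e.\ $s_{i}^{*} s_{j} = \delta_{ij}$); the matrix units $e_{ij} \coloneqq s_{i} s_{j}^{*}$ then generate a \cast-subalgebra isomorphic to $\K$. Fullness is immediate from proper infiniteness: since $1_{A}$ is Murray--von Neumann equivalent to a subprojection of $s_{1} s_{1}^{*}$, there exists $v \in A$ with $v^{*}v = 1_{A}$ and $v v^{*} \le s_{1} s_{1}^{*}$, and so $1_{A} = v^{*}(v v^{*})v$ lies in the two-sided ideal generated by $e_{11}$, forcing this ideal to be all of $A$. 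Applying the first assertion then concludes the proof. The only mild obstacle is the fullness verification, which is a standard but worth-stating consequence of proper infiniteness rather than a computational step.
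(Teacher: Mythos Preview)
Your proposal is correct and follows exactly the paper's approach: the paper simply says the corollary is obtained by combining Lemma~\ref{lem Bott} and Lemma~\ref{lem Kirchberg}, together with the remark (stated just before Lemma~\ref{lem Kirchberg}) that stable algebras are Bott inverting by Theorem~\ref{thm BP} and Remark~\ref{rem BI}. Your additional paragraph verifying that a separable unital properly infinite algebra contains a full copy of $\K$ is a standard fact the paper leaves implicit, and your argument for it is fine.
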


\begin{rem} Same methods show that comparison map from algebraic to topological $K$-theory 
	\begin{equation}
	K^\mathrm{alg}_{*}(D) \to K^\mathrm{top}_{*}(D)
	\end{equation}
is an isomorphism if $D$ has a stable full \cast-subalgebra (c.f.\ \cite{MR1081504}).
\end{rem}

%

Now we are ready to state and prove the connective versions of Theorem~\ref{thm shulman} and Theorem~\ref{thm DL}, which we recover by considering stable algebras.

\begin{thm}\label{thm con Shul}
Let $A$ be a separable \cast-algebra. If $D$ is Bott inverting, then we have equivalences
	\begin{equation}
	qA \otimes D \cong_{\m} q\C \otimes A \otimes D \cong_{\m} \Sigma^{2} \otimes A \otimes D.
	\end{equation}
\end{thm}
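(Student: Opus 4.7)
The plan is to obtain the two claimed $\m$-equivalences by stringing together Proposition~\ref{prop qBD} and the Bott inverting hypothesis on $D$, together with the standing functoriality of $\otimes$ on $\m$ (property (\ref{m4})) and the built-in matrix stability of $\m$. No new ingredients are required: the theorem is designed to package Proposition~\ref{prop qBD} and the definition of \emph{Bott inverting} into the desired chain.

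For the first $\m$-equivalence $qA \otimes D \cong_\m q\C \otimes A \otimes D$, I would apply Proposition~\ref{prop qBD} with $B = \C$, using the identification $\C \otimes A = A$. This yields that the natural map $\sigma_{\C, A}\colon qA \to q\C \otimes A$ is an $\m$-equivalence. Then tensoring this map with $\id_D$ and using that $\otimes$ is a bilinear functor on $\m$ (property (\ref{m4})), we conclude that $\sigma_{\C, A} \otimes \id_D \colon qA \otimes D \to q\C \otimes A \otimes D$ is an $\m$-equivalence.

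For the second $\m$-equivalence $q\C \otimes A \otimes D \cong_\m \Sigma^{2} \otimes A \otimes D$, I would observe that by Remark~\ref{rem BI}(i), the algebra $A \otimes D$ is Bott inverting. Applying the definition of Bott inverting to $A \otimes D$ in place of $D$ gives that
	\begin{equation*}
	u \otimes \id_{A \otimes D}\colon q\C \otimes A \otimes D \to \Sigma^{2} \otimes M_{2} \otimes A \otimes D
	\end{equation*}
is an $\m$-equivalence. Since $\Sigma^{2} \otimes M_{2} \otimes A \otimes D \cong \Sigma^{2} \otimes A \otimes D \otimes M_{2}$ and the inclusion $\id \otimes i_{2,1}$ is an $\m$-equivalence by matrix stability of $\m$, composing yields the desired $\m$-equivalence $q\C \otimes A \otimes D \cong_\m \Sigma^{2} \otimes A \otimes D$.

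I do not expect any real obstacle: both assembly steps are formal manipulations of $\m$-equivalences, and all the substantive content has already been done — Proposition~\ref{prop qBD} handles the factorization of $q$ through tensor products, while the Bott inverting hypothesis (combined with Remark~\ref{rem BI}) handles the Bott map. The only point to be slightly careful about is the bookkeeping of $M_{2}$ factors, which are absorbed by matrix stability.
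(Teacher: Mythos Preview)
Your proposal is correct and follows essentially the same approach as the paper, which simply states that the result ``follows from Proposition~\ref{prop qBD} and Bott invertibility.'' You have just unpacked this terse sentence into its constituent steps --- applying $\sigma_{\C,A}$ then tensoring with $\id_D$ for the first equivalence, and invoking Remark~\ref{rem BI}(i) plus matrix stability to absorb the $M_2$ for the second --- which is exactly the intended reading.
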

\begin{proof}
Follows from Proposition~\ref{prop qBD} and Bott invertibility.
\end{proof}

\begin{defn}[{A.\ Thom \cite[Theorem 4.2.1]{andreasthomE}}] Let $A$ and $B$ be separable \cast-algebras. For $n \in \Z$, we define $\bu_{n}(A, B)$ as the colimit
	\begin{equation}
	\bu_{n}(A, B) \coloneqq \colim_{k \to \infty}\m(\Sigma^{k} \otimes A, \Sigma^{k+n} \otimes B)	
	\end{equation}
along the suspension maps. The {\em connective $E$-category} $\bu$ is the category with morphisms $\bu_{0}(A, B)$.
\end{defn}

Let $X$ and $Y$ be based, connected, finite CW-complexes. Then from the proof of Theorem \cite[Theorem 4.2.1]{andreasthomE}, we see that 
	\begin{equation}
	\bu_{n}(C_{0}(X), C_{0}(Y)) \cong \kk_{n}(Y, X)
	\end{equation}
in the notation of \cite{MR1066807, MR1800209}.

\begin{ex}\label{ex not BI}
Let $X$ be a based, connected, finite CW-complex and let $D = C_{0}(X)$. Then, for any $k \le 0$, we have $\bu_{k}(D, \C) \cong 0$ by \cite[Corollary 3.4.3]{MR1066807}.

We claim that $D$ is Bott inverting if and only if $D$ is $\m$-contractible. Indeed, first note that, by Proposition~\ref{prop HS qA}, the map 
	\begin{equation}
	\id_{\Sigma^{1}} \otimes \pi_{\C} \colon \Sigma^{1} \otimes q\C  \to \Sigma^{1}	
	\end{equation}
is an $\m$-equivalence, thus $\pi_{\C}\colon q\C \to \C$ is a $\bu$-equivalence. Now suppose that $D$ is Bott inverting. Then   
	\begin{equation}
	\bu_{k}(D, \C) \cong \bu_{k}(q\C \otimes D, \C) \cong \bu_{k-2}(D, \C).	
	\end{equation}
for any $k \in \Z$. Hence the map $0\colon D \to 0$ induces an $\m$-equivalence by \cite[Theorem 2.4]{MR1800209}. The converse is clear. 

In particular, for any $k \ge 0$, the algebra $\Sigma^{k}$ is {\em not} Bott inverting.
\end{ex}

\begin{thm}\label{thm con DL} Let $A$ and $B$ be a Bott inverting separable \cast-algebras. If $A$ is matrix homotopy symmetric, then we have a natural isomorphism 
	\begin{equation}
	\m(A, B) \cong \bu(A, B).
	\end{equation}
\end{thm}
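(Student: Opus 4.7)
My plan is to show that, under the hypotheses, the double suspension map
\[
\Sigma^2 \otimes -\colon \m(A, B) \to \m(\Sigma^2 \otimes A, \Sigma^2 \otimes B), \quad f \mapsto \id_{\Sigma^2} \otimes f,
\]
is a bijection. Since, by Remark~\ref{rem HS} and Remark~\ref{rem BI}, every $\Sigma^k \otimes A$ is again matrix homotopy symmetric and Bott inverting and every $\Sigma^k \otimes B$ is again Bott inverting, the same argument applies at each stage of the colimit defining $\bu(A, B)$; as the even-indexed subsystem is cofinal, this will yield the desired isomorphism $\m(A, B) \cong \bu(A, B)$.

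For the bijectivity of $\Sigma^2 \otimes -$ I would introduce the $\m$-equivalence
\[
\phi_A \coloneqq (u \otimes \id_A) \circ (\pi_\C \otimes \id_A)^{-1} \colon A \to \Sigma^2 \otimes A,
\]
well-defined in $\m$ because $\pi_\C \otimes \id_A$ is an $\m$-equivalence by Proposition~\ref{prop HS qA}(\ref{4}) (matrix homotopy symmetry of $A$) and $u \otimes \id_A$ is an $\m$-equivalence by Bott invertibility of $A$. Post-composing $\Sigma^2 \otimes -$ with $(\phi_A)^*\colon g \mapsto g \circ \phi_A$ lands in $\m(A, \Sigma^2 \otimes B)$, where I compare it with the zigzag isomorphism
\[
\Psi \colon \m(A, B) \xrightarrow[\cong]{(\pi_\C \otimes \id_B)_*^{-1}} \m(A, q\C \otimes B) \xrightarrow[\cong]{(u \otimes \id_B)_*} \m(A, \Sigma^2 \otimes B),
\]
whose first arrow is an isomorphism by Proposition~\ref{prop HS qA}(\ref{2}) (using matrix homotopy symmetry of $A$ — crucially, not of $B$) and whose second is an isomorphism by Bott invertibility of $B$. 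Granting $(\phi_A)^* \circ (\Sigma^2 \otimes -) = \Psi$, the double suspension is $((\phi_A)^*)^{-1} \circ \Psi$, a composition of bijections.

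The heart of the argument, and the step I expect to be most delicate, is the commutation $(\phi_A)^* \circ (\Sigma^2 \otimes -) = \Psi$. Unwinding the definition of $\phi_A$ and using tensor bifunctoriality, this reduces to the identity
\[
\id_{q\C} \otimes f = g \circ (\pi_\C \otimes \id_A) \quad \text{in } \m(q\C \otimes A, q\C \otimes B), \qquad g \coloneqq (\pi_\C \otimes \id_B)_*^{-1}(f).
\]
I would verify it by pushing forward along $(\pi_\C \otimes \id_B)_*$: both sides become $f \circ (\pi_\C \otimes \id_A)$, so the identity follows once $(\pi_\C \otimes \id_B)_*\colon \m(q\C \otimes A, q\C \otimes B) \to \m(q\C \otimes A, B)$ is injective. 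The latter is an isomorphism by Proposition~\ref{prop HS qA}(\ref{2}) applied to $q\C \otimes A$, which is matrix homotopy symmetric because $q\C$ is (Example~\ref{ex qA}). This last observation — that the cancellation requires only the \emph{domain} $q\C \otimes A$ to be matrix homotopy symmetric — is precisely what lets us avoid assuming $B$ is matrix homotopy symmetric.
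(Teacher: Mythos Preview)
Your argument is correct and follows essentially the same route as the paper: both proofs show that the double suspension $\m(A,B)\to\m(\Sigma^2\otimes A,\Sigma^2\otimes B)$ is an isomorphism by replacing $\Sigma^2$ with $q\C$ via Bott invertibility and then using Proposition~\ref{prop HS qA} to collapse the $q\C$'s. Your write-up is more explicit where the paper says ``easy to check'': you spell out the identity $\id_{q\C}\otimes f = g\circ(\pi_\C\otimes\id_A)$ and justify it by pushing forward along $(\pi_\C\otimes\id_B)_*$, invoking matrix homotopy symmetry of $q\C\otimes A$ rather than of $B$; and you make the passage from ``$\Sigma^2$ is an isomorphism'' to ``$\m(A,B)\cong\bu(A,B)$'' explicit via cofinality of the even-indexed subsystem.
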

\begin{proof} Suppose that $A$ is matrix homotopy symmetric. By Proposition~\ref{prop HS qA}, we have isomorphisms
	\begin{equation}
	\xymatrix{
	 \m(A, q\C \otimes B) \ar[d]^-{\cong} \ar[r]^-{\cong} &\m(q\C \otimes A, q\C \otimes B) \ar[d]^-{\cong}\\
	\m(A, B)\ar[r]^-{\cong} & \m(q\C \otimes A, B)\\}.
	\end{equation}
and by Bott invertibility, we have 
	\begin{equation}
	\m(q\C \otimes A, q\C \otimes B) \cong \m(\Sigma^2 \otimes  A, \Sigma^2 \otimes  B). 	
	\end{equation}
Now it is easy to check that the composition 
	\begin{equation}
	\m(A, B) \to \m(q\C \otimes A, q\C \otimes B) \to \m(\Sigma^2 \otimes  A, \Sigma^2 \otimes B)
	\end{equation}
is the double suspension $\Sigma^2$.	
\end{proof}

\begin{cor} Let $A$ and $B$ be a matrix homotopy symmetric, Bott inverting separable \cast-algebras. Then $A$ and $B$ are $\bu$-equivalent if and only if they are $\m$-equivalent. 
\qed
\end{cor}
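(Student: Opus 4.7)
The plan is to deduce this directly from Theorem~\ref{thm con DL}. Under the stated hypotheses on $A$ and $B$, that theorem produces a natural isomorphism $\m(A,B) \cong \bu(A,B)$, and its proof identifies this isomorphism as being induced by the double suspension functor $\Sigma^2$. In particular, the isomorphism is functorial in both variables, so it is compatible with composition and sends identity morphisms to identity morphisms.

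The forward implication will be immediate: the canonical functor $\m \to \bu$ sends any $\m$-equivalence to a $\bu$-equivalence. For the converse, I would begin with a $\bu$-equivalence $\phi \in \bu(A,B)$ together with a two-sided inverse $\psi \in \bu(B,A)$. Applying Theorem~\ref{thm con DL} to the pairs $(A,B)$ and $(B,A)$ yields unique lifts $\tilde\phi \in \m(A,B)$ and $\tilde\psi \in \m(B,A)$. Applying the theorem also to the pairs $(A,A)$ and $(B,B)$, and using that the isomorphism respects composition, the composites $\tilde\psi \circ \tilde\phi$ and $\tilde\phi \circ \tilde\psi$ must lift $\psi \circ \phi = \id_A$ and $\phi \circ \psi = \id_B$ respectively. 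By uniqueness of the lift (using that $\id_A$ and $\id_B$ themselves lift to the respective identities) we will conclude $\tilde\psi \circ \tilde\phi = \id_A$ and $\tilde\phi \circ \tilde\psi = \id_B$, so $\tilde\phi$ is an $\m$-equivalence.

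There is no substantial obstacle: this corollary is a formal consequence of Theorem~\ref{thm con DL}. The only point that will require verification is that the isomorphism produced there is indeed compatible with composition and identities, which is transparent from the description as the double suspension. If one preferred, the same conclusion could be packaged as the observation that a fully faithful functor between categories reflects equivalences, applied to the subcategory of $\m$ spanned by matrix homotopy symmetric Bott inverting algebras.
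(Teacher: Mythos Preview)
Your argument is correct and is exactly the formal deduction the paper has in mind: the corollary carries no proof in the paper (only a \qed), since it follows immediately from Theorem~\ref{thm con DL} together with the observation that the natural isomorphism there is realized by the suspension functor and hence respects composition and identities. One cosmetic point: you have swapped the labels ``forward'' and ``converse'' relative to the statement as written (the statement reads $\bu$-equivalent $\Leftrightarrow$ $\m$-equivalent), but the mathematics for each direction is correct.
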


\bibliographystyle{amsalpha}
\bibliography{../BibTeX/biblio}

\end{document}